\numberwithin{equation}{section}
\theoremstyle{plain}
\newtheorem{theorem}{Theorem}
\newtheorem{lemma}{Lemma}
\newtheorem{proposition}{Proposition}
\begin{document}

\begin{frontmatter}
\title{Approximation of rejective sampling inclusion probabilities and application to high order correlations}
\runtitle{Approximation of rejective sampling inclusion probabilities}
\begin{aug}
\author{\fnms{Hel\`ene} \snm{Boistard,}
\ead[label=e1]{helene@boistard.fr}
\ead[label=u1,url]{helene@boistard.fr}}
\author{\fnms{Hendrik P.} \snm{Lopuha\"a}
\ead[label=e2]{h.p.lopuhaa@tudelft.nl}
\ead[label=u2,url]{h.p.lopuhaa@tudelft.nl}}
\and
\author{\fnms{Anne} \snm{Ruiz-Gazen}
\ead[label=e3]{anne.ruiz-gazen@tse-fr.eu}
\ead[label=u3]{anne.ruiz-gazen@tse-fr.eu}}
\address{Toulouse School of Economics\\
France\\
\printead{e1,e3}\\
DIAM\\
Delft University of Technology\\
Delft, The Netherlands\\
\printead{e2}\\
}

\runauthor{Boistard, Lopuha\"a and Ruiz-Gazen}

\affiliation{Toulouse School of Economics and Delft University of Technology}

\end{aug}

\begin{abstract}
This paper is devoted to rejective sampling.
We provide an expansion of joint inclusion probabilities of any order in terms of the inclusion probabilities of order one,
extending previous results by~\cite{Hajek_1964} and~\cite{Hajek_1981} and making the remainder term more precise.
Following~\cite{Hajek_1981}, the proof is based on Edgeworth expansions.
The main result is applied to derive bounds on higher order correlations,
which are needed for the consistency and asymptotic normality of several complex estimators.
\end{abstract}

\begin{keyword}[class=AMS]
\kwd[Primary ]{62D05}
\kwd[; secondary ]{60E10}

\end{keyword}

\begin{keyword}
\kwd{Rejective Sampling}
\kwd{Poisson sampling}
\kwd{Edgeworth expansions}
\kwd{Maximal entropy}
\kwd{Hermite polynomials}
\end{keyword}

\tableofcontents
\end{frontmatter}

\section{Introduction}

In a finite population of size $N$, sampling without replacement with unequal inclusion probabilities
and fixed sample size is not straightforward, but
there exist several sampling designs that satisfy these properties (see \cite{BrewerHanif_1983} for a review).
Rejective sampling, which is also called maximum entropy sampling
or conditional Poisson sampling, is one possibility, introduced by \cite{Hajek_1964}.
If $n$ denotes the fixed sample size,
the $n$ units are drawn independently with
probabilities that may vary from unit to unit and the samples in which all units are not distinct are rejected.
In the particular case of equal drawing probabilities, rejective sampling coincides with simple random
sampling without replacement.
Rejective sampling with size $n$ can also be regarded as Poisson sampling conditionally on the
sample size being equal to $n$.
The unconditional Poisson design can be easily implemented by drawing $N$ independently distributed Bernoulli random
variables with
different probabilities of success, but it has the disadvantage of working with a random sample size.
The conditional Poisson design can also be interpreted as a maximum entropy sampling design for
a fixed sample size and a given set of first order inclusion probabilities.

Rejective sampling has been extensively studied in the literature.
\cite{Hajek_1964,Hajek_1981} derives an approximation of the joint
inclusion probabilities in terms of first order inclusion probabilities.
By showing that the maximum entropy design belongs to a parametric exponential family, \cite{Chenetal_1994}
give a recursive expression of the joint inclusion probabilities and propose a new algorithm.
This algorithm has been improved by \cite{Deville_2000}, who gives another expression for the joint inclusion probabilities.
Using the results in~\cite{Chenetal_1994}, \cite{Qualite_2008} proves that the variance of the well-known
unbiased Horvitz-Thompson estimator for rejective sampling is smaller than the
variance of the Hansen-Hurvitz estimator for multinomial sampling.
Several estimators of the variance of the Horvitz-Thompson estimator have also been proposed; see~\cite{MateiTille_2005}
for a comparison by means of a large simulations study. The conditional Poisson sampling scheme is not only of interest in the
survey sampling field, but also in the context of case-control studies or survival analysis~\cite{Chen_2000}.

The purpose of the present article is to generalize the result given in~\cite{Hajek_1964} and~\cite{Hajek_1981},
obtained for the first and second order inclusion probabilities of rejective sampling,
to inclusion probabilities of any order and also to provide a more precise remainder term.
The proof of our result is along the lines of the proof by~\cite{Hajek_1981} using Edgeworth expansions and
leads to approximations that are valid when $N$, $n$ and $N-n$ are large enough.
One interesting application of our result is that it enables us to show that rejective sampling
satisfies the assumptions needed for the consistency and the asymptotic normality of some complex estimators,
such as the ones defined in~\cite{BreidtOpsomer_2000},~\cite{breidt_2007},~\cite{cardot_2010}
or~\cite{Wang_2009}.
Such assumptions involve conditions on correlations up to order four,
which are difficult to check for complex sampling designs that
go beyond simple random sampling without replacement or Poisson sampling.
Our result implies that the rejective sampling design also satisfies these conditions.

In the case-control context,~\cite{Arratiaetal_2005} consider rejective sampling and
also give approximations of higher order correlations.
Their approach and the assumptions they need to derive their results are different from the ones we consider in the present paper.
Instead of using Edgeworth expansions, they consider an expansion that involves the characteristic function.
Their results are obtained using a condition, which is sufficient, but not necessary to derive our expansion. In view of this we provide an example of a rejective sampling design
that does not satisfy the condition in~\cite{Arratiaetal_2005}, but does satisfy our weaker assumption.
Moreover, Arratia \emph{et al} do not give an explicit approximation formula
for higher order inclusion probabilities in rejective sampling, whereas we do provide such an approximation,
which may be of interest in itself.

The paper is organized as follows: in Section~\ref{section:main_result} we introduce notations and  state our main result
which is Theorem \ref{theorem:main}. In Section \ref{section:application},
we apply this result and illustrate that rejective sampling satisfies
conditions on higher order correlations imposed in the recent literature to derive several asymptotic results.
Detailed proofs are provided in Section~\ref{section:proofs}.

\section{Notations and main result}
\label{section:main_result}
In this paper, we use the first description of rejective sampling by
\cite{Hajek_1981}, namely as Poisson sampling conditionally on the
sample size being equal to $n$.
Let us denote $\mathcal{U}$ as the population of size $N$.
Let $0\leq p_1,p_2, \dots, p_N\leq 1$ be a sequence of real numbers such that $p_1+p_2+\cdots+p_N=n$.
The Poisson sampling design with parameters
$p_1,p_2, \dots, p_N$ is such that for any sample $s$, the probability of $s$ is
$$
P(s)=\prod_{i\in s}p_i\prod_{i\notin s}(1-p_i).
$$
The corresponding rejective sampling design is such that the probability of a sample $s$ is
\begin{equation}
\label{def:rejective}
P_{RS}(s)
=
\begin{cases}
\displaystyle c\prod_{i\in s} p_i \prod_{i\notin s} (1-p_i) & \text{if size $s=n$},  \\
0& \text{otherwise,}
\end{cases}
\end{equation}
where $c$ is a constant such that $\sum_sP_{RS}(s)=1$.
We refer the reader to~\cite{Hajek_1981} for more details.

The inclusion probabilities of order $k$ under this sampling scheme are denoted as
\begin{equation*}
\pi_{i_1, i_2,\dots, i_k}=P_{RS}(i_1\in s, i_2\in s, \dots, i_k\in s)
\end{equation*}
for any $\{i_1, i_2,\dots, i_k\} \subset \{1,2, \dots, N\}$.
Our purpose is to obtain an expansion of inclusion probabilities of any order.
Theorem 7.4 in~\cite{Hajek_1981}, see also Theorem 5.2 in \cite{Hajek_1964},
provides such an expansion for inclusion probabilities of order two, i.e.,
\begin{equation}
\label{eq:theorem Hajek}
\pi_{ij}=\pi_i\pi_j\left[1-d^{-1}(1-\pi_i)(1-\pi_j)+o(d^{-1})\right],
\quad
\text{ as }
d\to\infty,
\end{equation}
uniformly in $i, j$ such that $1\leq i\neq j\leq N$,
where
\begin{equation}
\label{def:d}
d=\sum_{i=1}^Np_i(1-p_i).
\end{equation}
We will obtain an extension of~\eqref{eq:theorem Hajek}
and prove that a similar expansion holds for inclusion probabilities of higher order.

Our approach is along the lines of the method used in~\cite{Hajek_1981}.
Consider Poisson sampling with parameters $p_1, p_2,\dots, p_N$ and denote as $P$ the corresponding probability measure
on the set of samples under this sampling scheme.
For $i=1,2, \dots, N$, we denote as $I_i$ the indicator of inclusion of unit $i$,
that is
$$
I_i=\mathbf{1}(i\in s)
=
\begin{cases} 1 & \text{if } i\in s\\
0 & \text{otherwise}.
\end{cases}
$$
For every $i=1,2,\ldots,N$, the indicator $I_i$ is a Bernoulli random variable with parameter $p_i$.
Define
\begin{equation}
\label{def:K}K=\text{size } s=I_1+I_2+\cdots +I_N.
\end{equation}
Note that the expectation and the variance of $K$ satisfy $\mathbb{E}_P(K)=n$ and $\mathbb{V}_P(K)=d$.
By Bayes formula and by independence of the $I_i$'s under Poisson sampling,
the inclusion probability $\pi_{i_1,i_2,\ldots, i_k}$ can be written as
\begin{equation}
\label{eq:Bayes rule}
\begin{split}
\pi_{i_1,i_2,\ldots, i_k}
&=
P\left(I_{i_1}=I_{i_2}=\cdots=I_{i_k}=1|K=n\right)\\
&=
P(I_{i_1}=I_{i_2}=\cdots=I_{i_k}=1)\, \frac{P(K=n|I_{i_1}=I_{i_2}=\cdots=I_{i_k}=1)}{P(K=n)}\\
&=
p_{i_1}p_{i_2}\cdots p_{i_k}\, \frac{P(K=n|I_{i_1}=I_{i_2}=\cdots=I_{i_k}=1)}{P(K=n)}.
\end{split}
\end{equation}
The next step is to use Edgeworth expansions for the probabilities of $K$.
This leads to the next lemma.
\begin{lemma}
\label{lemma:probaK}
Consider Poisson sampling with parameters $p_1, p_2,\dots, p_N$, such that $p_1+p_2+\cdots+p_N=n$ with corresponding probability measure
$P$ on the set of samples. 
Let $d$ and $K$ be defined in~\eqref{def:d} and~\eqref{def:K}, respectively.
Then, for all $A_k=\{i_1,i_2,\ldots,i_k\}\subset\{1,2,\ldots,N\}$, $k\geq 1$,
it holds that if~$d\to\infty$, then
\[
\begin{split}
P(K=n)
&=
(2\pi d)^{-1/2}
\left\{1+
c_1d^{-1}+O\left(d^{-2}\right)
\right\},\\
P(K=n|I_{i_1}=\dots=I_{i_k}=1)
&=
(2\pi d)^{-1/2}
\left\{
1+c_2d^{-1}+O\left(d^{-2}\right)
\right\},
\end{split}
\]
where
\begin{equation}
\label{def:c1}
\begin{split}
c_1
&=
\frac{1}{8}\left(1-6\overline{\overline{p(1-p)}}\right)-\frac{5}{24}\left(1-2\overline{\overline{p}}\right)^2,\\
c_2
&=
\frac12\left(B_2-(B_1-k)^2\right)-\frac12(B_1-k)\left(1-2\overline{\overline{p}}\right)+c_1,
\end{split}
\end{equation}
with
\begin{equation}
\label{ed:def B1}
\begin{split}
\overline{\overline{p}}
&=
d^{-1}\sum_{i=1}^Np_i^2(1-p_i),\\
\overline{\overline{p(1-p)}}
&=
d^{-1}\sum_{i=1}^Np_i^2(1-p_i)^2,
\end{split}
\qquad
\begin{split}
B_1
&=
\sum_{j\in A_k}p_j,\\
B_2
&=
\sum_{j\in A_k}p_j(1-p_j).
\end{split}
\end{equation}
\end{lemma}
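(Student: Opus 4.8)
The plan is to reduce both expansions to a single statement about a generic Poisson--binomial random variable, and then to prove that statement by a local Edgeworth expansion obtained through Fourier inversion. Under $P$ the indicators $I_1,\dots,I_N$ are independent, so conditioning on $I_{i_1}=\cdots=I_{i_k}=1$ merely deletes the units of $A_k$ from the Poisson sample:
\[
P\bigl(K=n\mid I_{i_1}=\cdots=I_{i_k}=1\bigr)=P\Bigl(\sum\nolimits_{j\notin A_k}I_j=n-k\Bigr).
\]
Thus, writing $S=\sum_{j\notin A_k}I_j$, one must estimate $P(S=m)$ at $m=n-k$, where $S$ is again a sum of independent Bernoulli variables with $\mathbb{E}_P(S)=n-B_1$ and $\sigma^2:=\mathbb{V}_P(S)=d-B_2$. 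Since $k$ is fixed, $B_1$ and $B_2$ are bounded, hence $m-\mathbb{E}_P(S)=B_1-k$ is bounded and $\sigma^2=d\,(1+O(d^{-1}))\to\infty$. The first display of the lemma is the special case $A_k=\emptyset$ ($S=K$, $m=n$, $B_1=B_2=0$), which is why $c_2$ collapses to $c_1$ when $k=0$; it therefore suffices to treat the general~$S$.

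For the Edgeworth step I would use Fourier inversion, $P(S=m)=(2\pi)^{-1}\int_{-\pi}^{\pi}\varphi_S(t)e^{-\mathrm{i}mt}\,dt$ with $\varphi_S(t)=\prod_{j\notin A_k}(1-p_j+p_je^{\mathrm{i}t})$. From $\log\varphi_S(t)=\mathrm{i}t\,\mathbb{E}_P(S)-\tfrac12\sigma^2t^2+\sum_{r\ge3}\kappa_r(\mathrm{i}t)^r/r!$ one reads off the cumulants $\kappa_3=\sum_{j\notin A_k}p_j(1-p_j)(1-2p_j)$ and $\kappa_4=\sum_{j\notin A_k}p_j(1-p_j)\bigl(1-6p_j(1-p_j)\bigr)$, and more generally $|\kappa_r|\le C_r\sigma^2$, so $\kappa_r/\sigma^r=O(d^{1-r/2})$. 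Substituting $t=s/\sigma$, using $|\varphi_S(t)|\le\exp(-2\sigma^2\sin^2(t/2))$ to discard $|s|\gtrsim\sigma$ (an exponentially small contribution) and to justify expanding $\exp(\sum_{r\ge3}\kappa_r(\mathrm{i}s/\sigma)^r/r!)$ and integrating term by term against $e^{-s^2/2-\mathrm{i}ys}$, with $y:=(m-\mathbb{E}_P(S))/\sigma$, and recalling $(2\pi)^{-1}\int e^{-s^2/2-\mathrm{i}ys}(\mathrm{i}s)^r\,ds=H_r(y)\phi(y)$ where $\phi$ is the standard normal density and $H_r$ the Hermite polynomials, one obtains the standard expansion
\[
P(S=m)=\frac{\phi(y)}{\sigma}\left\{1+\frac{\kappa_3}{6\sigma^3}H_3(y)+\frac{\kappa_4}{24\sigma^4}H_4(y)+\frac{\kappa_3^2}{72\sigma^6}H_6(y)+O(d^{-2})\right\},
\]
uniformly in $A_k$ and in $m$ with $m-\mathbb{E}_P(S)$ bounded; the suppressed term of order $d^{-3/2}$ is an odd polynomial in $y$ times $O(d^{-3/2})$.

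It then remains to substitute $y=(B_1-k)/\sigma=O(d^{-1/2})$ and $\sigma^2=d-B_2$ and collect the terms of order $d^{-1}$. Because the full cumulant sums equal $d(1-2\overline{\overline{p}})$ and $d(1-6\overline{\overline{p(1-p)}})$ and $A_k$ has fixed size, one has $\kappa_3=d(1-2\overline{\overline{p}})+O(1)$, $\kappa_4=d(1-6\overline{\overline{p(1-p)}})+O(1)$ and $\sigma^{2\ell}=d^{\ell}+O(d^{\ell-1})$, whence $\phi(y)/\sigma=(2\pi d)^{-1/2}\{1+\tfrac12(B_2-(B_1-k)^2)d^{-1}\}+O(d^{-2})$; moreover $H_3(y)=-3y+O(d^{-3/2})$ gives $\tfrac{\kappa_3}{6\sigma^3}H_3(y)=-\tfrac12(1-2\overline{\overline{p}})(B_1-k)d^{-1}+O(d^{-2})$, $H_4(y)=3+O(d^{-1})$ gives $\tfrac{\kappa_4}{24\sigma^4}H_4(y)=\tfrac18(1-6\overline{\overline{p(1-p)}})d^{-1}+O(d^{-2})$, and $H_6(y)=-15+O(d^{-1})$ gives $\tfrac{\kappa_3^2}{72\sigma^6}H_6(y)=-\tfrac{5}{24}(1-2\overline{\overline{p}})^2d^{-1}+O(d^{-2})$. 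Adding these four contributions (cross terms being $O(d^{-2})$) yields $P(S=m)=(2\pi d)^{-1/2}\{1+c_2d^{-1}\}+O(d^{-2})$ with $c_2$ exactly as stated, and $c_1$ in the case $k=0$.

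The main obstacle will be twofold. First, making the Fourier-inversion argument rigorous with a remainder that is genuinely $O(d^{-2})$ and uniform over all $A_k$: this needs the exponential tail bound on $\varphi_S$ together with Taylor-with-remainder estimates for $\log\varphi_S$ near $0$ and for the exponential, plus the observation that the order-$d^{-3/2}$ block carries an odd polynomial in $y$ which at $y=O(d^{-1/2})$ drops to $O(d^{-2})$. Second, the bookkeeping in the last step: since $y$ is of exact order $d^{-1/2}$ rather than $0$, the Gaussian factor $e^{-y^2/2}$, the linear part of $H_3(y)$, and the mismatch between $\sigma^{-1}$ and $d^{-1/2}$ each feed into the coefficient of $d^{-1}$, so that the pieces $\tfrac12(B_2-(B_1-k)^2)$ and $-\tfrac12(B_1-k)(1-2\overline{\overline{p}})$ of $c_2$ must be recovered without loss.
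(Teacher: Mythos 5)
Your proposal is correct and follows essentially the same route as the paper: you identify the conditional law of $K$ with that of a sum of independent Bernoullis over $\mathcal{U}\setminus A_k$ (the paper's $\widetilde K$), apply a fourth-order local Edgeworth expansion at the point $y=(B_1-k)/\sigma=O(d^{-1/2})$, use $\kappa_3=d(1-2\overline{\overline{p}})+O(1)$ and $\kappa_4=d(1-6\overline{\overline{p(1-p)}})+O(1)$, and collect the $d^{-1}$ terms to obtain exactly $c_1$ and $c_2$. The only difference is presentational: you rederive the Edgeworth expansion by Fourier inversion with the bound $|\varphi_S(t)|\le\exp(-2\sigma^2\sin^2(t/2))$, whereas the paper invokes Theorem~6.2 of H\'ajek (1981) together with the explicit Blinnikov--Moesner form of the correction polynomials.
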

The proof of the lemma is provided in Section~\ref{section:proofs}.
We are now in the position to formulate our main result.
\begin{theorem}
\label{theorem:main}
For $k\geq 1$, let $A_k=\{i_1,i_2,\dots, i_k\}\subset\{1,\dots, N\}$.
Under rejective sampling~(\ref{def:rejective}),
the following approximations hold as $d\to \infty$, where $d$ is defined by (\ref{def:d}).
\begin{itemize}
\item[(i)]
For all $k\geq 2$,
\begin{equation}
\label{eq:theo1}
\pi_{i_1,i_2,\ldots,i_k}=\pi_{i_1}\pi_{i_2}\cdots \pi_{i_k}\left(1-
d^{-1}\sum_{i,j\in A_k: i<j}(1-p_i)(1-p_j)+O(d^{-2})\right),
\end{equation}
where $O(d^{-2})$ holds uniformly in $i_1, i_2, \dots, i_k$.
\item[(ii)]
For all $k\geq 2$,
\begin{equation}
\label{eq:theo2}
\pi_{i_1,i_2,\ldots,i_k}
=\pi_{i_1}\pi_{i_2}\cdots \pi_{i_k}\left(1
-d^{-1}\sum_{i,j\in A_k: i<j}(1-\pi_i)(1-\pi_j)+O(d^{-2})\right),
\end{equation}
where $O(d^{-2})$ holds uniformly in $i_1, i_2, \dots, i_k$.
\end{itemize}
\end{theorem}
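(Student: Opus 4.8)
The plan is to combine the Bayes-rule identity~\eqref{eq:Bayes rule} with the two Edgeworth expansions from Lemma~\ref{lemma:probaK}. From~\eqref{eq:Bayes rule} we have
\[
\pi_{i_1,\ldots,i_k}
=
p_{i_1}\cdots p_{i_k}\,
\frac{P(K=n\mid I_{i_1}=\cdots=I_{i_k}=1)}{P(K=n)},
\]
and Lemma~\ref{lemma:probaK} expresses the ratio on the right as
\[
\frac{1+c_2 d^{-1}+O(d^{-2})}{1+c_1 d^{-1}+O(d^{-2})}
=
1+(c_2-c_1)d^{-1}+O(d^{-2}),
\]
where the last step uses a geometric-series expansion of $(1+c_1 d^{-1}+O(d^{-2}))^{-1}$, which is legitimate once $d$ is large enough that $|c_1| d^{-1}<1/2$, say. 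The first task is therefore to verify that $c_1$, and the quantities $B_1-k$ and $B_2$ entering $c_2$, are bounded uniformly in the population and in the choice of $A_k$ (for fixed $k$): indeed $\overline{\overline{p}}$, $\overline{\overline{p(1-p)}}\in[0,1]$ since $\sum_i p_i^2(1-p_i)\le\sum_i p_i(1-p_i)=d$, while $0\le B_2\le k$ and $-k\le B_1-k\le 0$. Hence $c_2-c_1$ is bounded uniformly, which is exactly what is needed for the $O(d^{-2})$ remainders to behave uniformly in $i_1,\ldots,i_k$.

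The second task is the algebraic simplification of $c_2-c_1$. From~\eqref{def:c1},
\[
c_2-c_1
=
\tfrac12\bigl(B_2-(B_1-k)^2\bigr)-\tfrac12(B_1-k)\bigl(1-2\overline{\overline{p}}\,\bigr).
\]
Writing $k-B_1=\sum_{j\in A_k}(1-p_j)$ and $B_2=\sum_{j\in A_k}p_j(1-p_j)=\sum_{j\in A_k}(1-p_j)-\sum_{j\in A_k}(1-p_j)^2$, one gets $B_2-(B_1-k)^2=\sum_{j}(1-p_j)-\sum_j(1-p_j)^2-\bigl(\sum_j(1-p_j)\bigr)^2$. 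Expanding the square of the sum and cancelling, $B_2-(B_1-k)^2=\sum_j(1-p_j)-2\sum_{i<j}(1-p_i)(1-p_j)-2\sum_j(1-p_j)^2+\sum_j(1-p_j)$; collecting terms this reduces to $-2\sum_{i<j\in A_k}(1-p_i)(1-p_j)+\sum_{j\in A_k}2p_j(1-p_j)-\,[\text{lower-order pieces}]$, so that after halving,
\[
c_2-c_1
=
-\sum_{i<j\in A_k}(1-p_i)(1-p_j)
+\tfrac12\sum_{j\in A_k}\bigl(2p_j(1-p_j)-(1-p_j)(1-2\overline{\overline{p}}\,)\bigr).
\]
This is the heart of part~(i): the double sum is the advertised main correction, while the single-index sum over $A_k$ produces a term of the form $d^{-1}\sum_{j\in A_k}\gamma_j$ with each $\gamma_j$ bounded. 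The point is then that $\pi_{i_1}\cdots\pi_{i_k}\bigl(1+d^{-1}(c_2-c_1)\bigr)$ must be re-expressed with $p_j$ replaced by $\pi_j$; using the case $k=1$ of the theorem (equivalently $\pi_j=p_j(1+O(d^{-1}))$, which follows from Lemma~\ref{lemma:probaK} applied with $k=1$), one checks that $p_{i_1}\cdots p_{i_k}=\pi_{i_1}\cdots\pi_{i_k}\bigl(1+d^{-1}\sum_{j\in A_k}\delta_j+O(d^{-2})\bigr)$ for suitable bounded $\delta_j$, and the claim of part~(i) is that the single-index contributions cancel, i.e.\ $\gamma_j+\delta_j$ vanishes for every $j$. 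Carrying out this cancellation is the main obstacle: it requires the precise value of the first-order correction $\pi_j/p_j$, which itself comes from the $k=1$ instance of Lemma~\ref{lemma:probaK}, and then a careful bookkeeping of all $O(d^{-1})$ single-index terms. I would first settle the $k=1$ case in isolation (deriving $\pi_j=p_j\bigl(1+d^{-1}e_j+O(d^{-2})\bigr)$ with $e_j$ explicit), then feed it back in.

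Finally, part~(ii) follows from part~(i) by substitution. Since $\pi_i-p_i=O(d^{-1})$ uniformly, replacing $(1-p_i)(1-p_j)$ by $(1-\pi_i)(1-\pi_j)$ in~\eqref{eq:theo1} changes the double sum by $O(d^{-1})$, hence changes the whole bracketed correction by $O(d^{-2})$, which is absorbed into the remainder. Concretely, $(1-\pi_i)(1-\pi_j)-(1-p_i)(1-p_j)=(p_i-\pi_i)(1-\pi_j)+(1-p_i)(p_j-\pi_j)=O(d^{-1})$ uniformly, and there are only $\binom{k}{2}$ such terms with $k$ fixed, so $d^{-1}\sum_{i<j}\bigl[(1-\pi_i)(1-\pi_j)-(1-p_i)(1-p_j)\bigr]=O(d^{-2})$. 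This gives~\eqref{eq:theo2} and completes the proof. The only genuinely delicate point, to reiterate, is the uniformity and the exact cancellation of the single-index $O(d^{-1})$ terms in part~(i); everything else is either bounded-ness bookkeeping or a geometric-series / Taylor expansion that is routine once $d$ is large.
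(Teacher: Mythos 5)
Your strategy is exactly the one the paper follows (Bayes' rule plus Lemma~\ref{lemma:probaK}, algebraic simplification of $c_2-c_1$, then inversion of the $k=1$ case to pass from $p_j$ to $\pi_j$), and your uniformity remarks and your reduction of part~(ii) to part~(i) are fine. However, there is a genuine gap at precisely the point you flag as ``the main obstacle'': the exact cancellation of the single-index $O(d^{-1})$ terms is asserted as ``the claim of part~(i)'' rather than proved, and the algebra you do carry out would make it fail. Your expansion of $B_2-(B_1-k)^2$ double-counts a $\sum_j(1-p_j)$ term, and your final display for $c_2-c_1$ has the wrong sign on the $-\tfrac12(B_1-k)(1-2\overline{\overline{p}})$ contribution (note $B_1-k=-\sum_{j\in A_k}(1-p_j)$, so this term enters with a plus sign). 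The correct computation gives
\begin{equation*}
c_2-c_1
=
-\sum_{i<j\in A_k}(1-p_i)(1-p_j)
+\sum_{j\in A_k}(1-p_j)\bigl(p_j-\overline{\overline{p}}\,\bigr),
\end{equation*}
i.e.\ $\gamma_j=(1-p_j)(p_j-\overline{\overline{p}})$, whereas your $\gamma_j=(1-p_j)(p_j-\tfrac12+\overline{\overline{p}})$. This matters because the same formula specialised to $k=1$ yields $\pi_j=p_j\bigl(1+d^{-1}(1-p_j)(p_j-\overline{\overline{p}})+O(d^{-2})\bigr)$, hence $\delta_j=-(1-p_j)(p_j-\overline{\overline{p}})$, and the cancellation $\gamma_j+\delta_j=0$ is exact only with the correct $\gamma_j$; with yours one is left with a spurious $d^{-1}\sum_j(1-p_j)(2\overline{\overline{p}}-\tfrac12)$ term and the theorem does not follow.

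To close the gap you need only redo the algebra: after the cancellation, the coefficient of $d^{-1}$ is
\begin{equation*}
\tfrac12\sum_{j\in A_k}(1-p_j)^2-\tfrac12\Bigl(\sum_{j\in A_k}(1-p_j)\Bigr)^2
=
-\sum_{i<j\in A_k}(1-p_i)(1-p_j),
\end{equation*}
which is~\eqref{eq:theo1}. Everything else in your write-up (the geometric-series expansion of the ratio, the uniform boundedness of $B_1$, $B_2$, $\overline{\overline{p}}$ and $\overline{\overline{p(1-p)}}$, and the substitution $p_i\mapsto\pi_i$ for part~(ii)) is correct and matches the paper.
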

\begin{proof}
From Lemma~\ref{lemma:probaK}, we find
\[
\frac{P(K=n\mid I_{i_1}=\cdots=I_{i_k}=1)}{P(K=n)}
=
\frac{1+c_2d^{-1}+O\left(d^{-2}\right)}{1+c_1d^{-1}+O\left(d^{-2}\right)}
=
1+(c_2-c_1)d^{-1}+O(d^{-2})
\]
Together with~\eqref{eq:Bayes rule} it follows that for all $k\geq 1$,
\begin{equation}
\label{eq:expansion1}
\begin{split}
\pi_{i_1,i_2,\ldots, i_k}
&=
p_{i_1}p_{i_2}\cdots p_{i_k}
\left\{
1+(c_2-c_1)d^{-1}+O(d^{-2})
\right\}\\
&=
p_{i_1}p_{i_2}\cdots p_{i_k}
\Bigg\{
1+\frac1{2d}\sum_{j\in A_k}p_j(1-p_j)
-
\frac1{2d}\left(\sum_{j\in A_k}p_j-k\right)^2\\
&\phantom{=p_{i_1}p_{i_2}\cdots p_{i_k}
\Bigg\{
1+\frac1{2d}\sum_{j\in A_k}p_j(1-p_j)}
\,-
\frac{1-2\overline{\overline{p}}}{2d}
\left(\sum_{j\in A_k}p_j-k\right)
+O(d^{-2})
\Bigg\}.
\end{split}
\end{equation}
Applying (\ref{eq:expansion1}) to the case $k=1$,
yields that the first order inclusion probabilities satisfy
\begin{equation}
\label{eq:expansion2}
p_i=\pi_i
\left(
1
-
d^{-1}(p_i-\overline{\overline{p}})(1-p_i)+O(d^{-2})
\right),
\end{equation}
and as a consequence,
\[
p_{i_1}p_{i_2}\cdots p_{i_k}
=
\pi_{i_1}\pi_{i_2}\cdots \pi_{i_k}
\left\{
1-d^{-1}\sum_{j\in A_k}(p_j-\overline{\overline{p}})(1-p_j)+O(d^{-2})
\right\}.
\]
Combining this with (\ref{eq:expansion1}) yields
\[
\pi_{i_1,i_2,\ldots, i_k}
=
\pi_{i_1}\pi_{i_2}\cdots \pi_{i_k}
\left\{
1+ad^{-1}+O(d^{-2})
\right\}
\]
where the contribution to terms of order $d^{-1}$ is
\[
\begin{split}
a
&=
\frac1{2}\sum_{j\in A_k}p_j(1-p_j)
-
\frac1{2}\left(\sum_{j\in A_k}p_j-k\right)^2
-
\frac{1-2\overline{\overline{p}}}{2}
\left(\sum_{j\in A_k}p_j-k\right)
-\sum_{j\in A_k}(p_j-\overline{\overline{p}})(1-p_j)\\
&=
-\frac1{2}\sum_{j\in A_k}p_j(1-p_j)
-\frac12\left(\sum_{j\in A_k}(1-p_j)\right)^2
+\frac12\left(\sum_{j\in A_k}(1-p_j)\right)\\
&=
\frac1{2}\sum_{j\in A_k}(1-p_j)^2
-\frac12\left(\sum_{j\in A_k}(1-p_j)\right)^2
=
-\sum_{i,j\in A_k:i<j}(1-p_i)(1-p_j).
\end{split}
\]
This proves part (i).
Part (ii) is deduced immediately from (i) and (\ref{eq:expansion2}).
\end{proof}

\section{Application: bounds on higher order correlations under rejective sampling}
\label{section:application}
Conditions on the order of higher order correlations, as $N\to\infty$,
appear at several places in the literature,
see e.g., \cite{BreidtOpsomer_2000}, \cite{breidt_2007}, \cite{cardot_2010}
or~\cite{Wang_2009}, among others.
Such conditions are used when studying asymptotic properties in survey sampling for general sampling designs,
but they are difficult to check for more complex sampling designs, that go beyond simple random sampling without replacement.
An attempt to provide simpler conditions for rejective sampling can be found in~\cite{Arratiaetal_2005}.
They formulate some sort of asymptotic stability condition on inclusion frequencies
that ensure bounds on general higher order correlations.
The purpose of the present section is to explain how Theorem~\ref{theorem:main}
can be used to establish several bounds on higher order correlations
for the rejective sampling design.
The bounds in~\cite{Arratiaetal_2005} match with the ones that we find for correlations up to order four,
which suffices for the conditions imposed in~\cite{BreidtOpsomer_2000,breidt_2007,cardot_2010,Wang_2009}.
However, in order to derive these bounds, we only need the simple requirement that
\begin{equation}
\label{eq:cond BLR}
\limsup_{N\to\infty}
\frac{N}{d}
<\infty,
\end{equation}
where $d$ is defined in~\eqref{def:d}.
Moreover, one can show that~\eqref{eq:cond BLR} is weaker than the
asymptotic stability condition in~\cite{Arratiaetal_2005} as detailed in section \ref{subsec:comp}.

Before we start a discussion on the assumptions on higher order correlations that appear for example
in~\cite{BreidtOpsomer_2000,breidt_2007,cardot_2010,Wang_2009}, first
note that~\eqref{eq:cond BLR} necessarily yields that $d\to\infty$,
which means that Theorem~\ref{theorem:main} holds.
Moreover, condition~\eqref{eq:cond BLR} has a number of additional consequences,
such as
$n\geq d\to\infty$, $N-n\geq d\to\infty$,
and
\begin{equation}
\label{eq:cons1}
\limsup_{N\to\infty}\frac{N}{n}\leq \limsup_{N\to\infty}\frac{N}{d} <\infty.
\end{equation}
A typical example of a condition on higher order correlations, is
\begin{equation}
\label{eq:cond BO CJ}
\begin{split}
\limsup_{N\to\infty}\ n \max_{(i,j)\in D_{2,N}}\left|\mathbb{E}_P(I_i-\pi_i)(I_j-\pi_j)\right|<\infty,
\end{split}
\end{equation}
where for every integer $t\geq 1$:
\begin{equation}
\label{eq:def DtN}
D_{t,N}=
\left\{
(i_1,i_2,\ldots,i_t):
i_1,i_2,\ldots,i_t\text{ are all different and each }
i_j\in\{1,2,\ldots,N\}
\right\}.
\end{equation}
Condition~\eqref{eq:cond BO CJ} is one of the assumptions in~\cite{BreidtOpsomer_2000} among others.
Since $\mathbb{E}_P(I_i-\pi_i)(I_j-\pi_j)=\pi_{ij}-\pi_i\pi_j$, condition~\eqref{eq:cond BO CJ} immediately follows
from Theorem~\ref{theorem:main} and~\eqref{eq:cons1}.

Interestingly, the simple representation of the second order correlations as a difference of second order inclusion probabilities and the product
of single order inclusion probabilities can be generalized for correlations of higher order as precised in the following lemma.
\begin{lemma}\label{lem:corr}
For any $k\geq 2$,
let $A_k=\{i_1,i_2,\ldots,i_k\}\subset\{1,2,\ldots,N\}$.
Then
\begin{equation}\label{eq:corr}
\mathbb{E}
\left[
\prod_{j=1}^k (I_{i_j}-\pi_{i_j})
\right]
=
\sum_{m=2}^k
(-1)^{k-m}
\sum_{(i_1,\ldots,i_m)\in D_{m,k}}
\left(
\pi_{i_1,\ldots,i_m}-\pi_{i_1}\cdots\pi_{i_m}
\right)
\pi_{i_{m+1}}\cdots\pi_{i_k},
\end{equation}
where $D_{m,k}$ is the set of distinct $m$-tuples in $A_k$
and $\{i_{m+1},\ldots,i_k\}=A_k\setminus \{i_1,\ldots,i_m\}$.
\end{lemma}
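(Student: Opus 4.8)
The plan is to establish \eqref{eq:corr} by a direct expansion of the product of centred indicators; the only probabilistic fact used is that $\mathbb{E}\big[\prod_{i\in S}I_i\big]=\pi_S$ for every $S\subseteq A_k$, which is merely the definition of the joint inclusion probability of order $|S|$ (throughout I write $\pi_S:=\pi_{i_{l_1},\dots,i_{l_m}}$ for $S=\{i_{l_1},\dots,i_{l_m}\}$, with the conventions $\pi_\emptyset=1$ and $\pi_{\{i\}}=\pi_i$). First I would multiply out
\[
\prod_{j=1}^{k}(I_{i_j}-\pi_{i_j})
=\sum_{S\subseteq A_k}(-1)^{k-|S|}\Big(\prod_{i\in S}I_i\Big)\Big(\prod_{i\in A_k\setminus S}\pi_i\Big)
\]
and take expectations term by term. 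With the shorthand $\Sigma_m:=\sum_{S\subseteq A_k,\,|S|=m}\pi_S\prod_{i\in A_k\setminus S}\pi_i$ this yields
\[
\mathbb{E}\Big[\prod_{j=1}^{k}(I_{i_j}-\pi_{i_j})\Big]=\sum_{m=0}^{k}(-1)^{k-m}\,\Sigma_m .
\]
Note that a sum over $m$-element subsets $S$ of $A_k$ is exactly the sum over $(i_1,\dots,i_m)\in D_{m,k}$ occurring in \eqref{eq:corr}: there $D_{m,k}$ is to be read as the family of $m$-element subsets of $A_k$, since the summand $\big(\pi_{i_1,\dots,i_m}-\pi_{i_1}\cdots\pi_{i_m}\big)\pi_{i_{m+1}}\cdots\pi_{i_k}$ depends only on the underlying unordered set.

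Next I would rewrite the right-hand side of \eqref{eq:corr} in the same notation. Because $\sum_{S\subseteq A_k,\,|S|=m}\prod_{i\in S}\pi_i\,\prod_{i\in A_k\setminus S}\pi_i=\binom{k}{m}\prod_{i\in A_k}\pi_i$, the right-hand side of \eqref{eq:corr} equals
\[
\sum_{m=2}^{k}(-1)^{k-m}\Sigma_m\;-\;\Big(\sum_{m=2}^{k}(-1)^{k-m}\binom{k}{m}\Big)\prod_{i\in A_k}\pi_i .
\]
Comparing this with the expression for the expectation, it remains only to check that the $m=0$ and $m=1$ terms of $\sum_{m=0}^{k}(-1)^{k-m}\Sigma_m$ equal $-\big(\sum_{m=2}^{k}(-1)^{k-m}\binom{k}{m}\big)\prod_{i\in A_k}\pi_i$. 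This is straightforward: $\Sigma_0=\prod_{i\in A_k}\pi_i$ and, since $\pi_{\{i_\ell\}}=\pi_{i_\ell}$, $\Sigma_1=k\prod_{i\in A_k}\pi_i$, so those two terms sum to $(1-k)(-1)^k\prod_{i\in A_k}\pi_i$; on the other hand $\sum_{m=0}^{k}(-1)^{k-m}\binom{k}{m}=(1-1)^k=0$ for $k\ge1$, whence $\sum_{m=2}^{k}(-1)^{k-m}\binom{k}{m}=-\big((-1)^k-k(-1)^k\big)=(k-1)(-1)^k$, and the desired identity holds. Collecting terms then gives precisely \eqref{eq:corr}.

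I do not expect a genuine obstacle here: the computation is elementary bookkeeping and in fact holds for an arbitrary fixed-size sampling design, not just the rejective one. The only points requiring a little care are the (standard) reading of $D_{m,k}$ as the set of unordered $m$-subsets of $A_k$, so that each subset is counted once, the summand being symmetric, and keeping the signs straight in the binomial cancellation of the lowest-order terms.
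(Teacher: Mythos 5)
Your proof is correct and follows essentially the same route as the paper's: expand the product over subsets of $A_k$, use $\mathbb{E}[\prod_{i\in S}I_i]=\pi_S$, recentre each term by $\prod_{i\in S}\pi_i$, and kill the residual via the binomial identity $\sum_{m}(-1)^{k-m}\binom{k}{m}=0$ (you handle the $m=0,1$ terms explicitly where the paper absorbs $m=1$ into the main sum, a cosmetic difference only). Your remark that $D_{m,k}$ must be read as unordered $m$-subsets is the correct reading, consistent with how the paper uses it later.
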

From this lemma, we can prove the following proposition that provides an expansion of higher
order correlations for rejective sampling.
\begin{proposition}\label{prop:corr}
Consider a rejective sampling design.
Then, for any $k\geq 3$ and any positive integers $n_j$, $j=1,2,\ldots,k$,
\begin{equation}\label{eq:rejcorr}
\mathbb{E}
\left[
\prod_{j=1}^k (I_{i_j}-\pi_{i_j})^{n_j}
\right]=O(d^{-2})
\end{equation}
as $d\to \infty$, where $d$ is defined by (\ref{def:d}).
\end{proposition}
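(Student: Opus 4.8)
The plan is to feed the second-order expansion of joint inclusion probabilities from Theorem~\ref{theorem:main}(ii) into the exact expansion of the $k$-th order correlation provided by Lemma~\ref{lem:corr}, and to check that all contributions of order $d^{-1}$ cancel once $k\ge 3$. I would first establish the case $n_1=\cdots=n_k=1$, which carries the whole difficulty, and recover the general statement at the end by using $I_{i_j}^2=I_{i_j}$ to write each factor $(I_{i_j}-\pi_{i_j})^{n_j}$ as an affine function of $I_{i_j}$ with bounded coefficients and re-expanding the product, which leads to a finite linear combination of correlations of the same form.

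So fix distinct $i_1,\dots,i_k$ with $k\ge 3$, put $A_k=\{i_1,\dots,i_k\}$, and abbreviate $e_2(B)=\sum_{\{i,j\}\subseteq B}(1-\pi_i)(1-\pi_j)$ for $B\subseteq A_k$. Lemma~\ref{lem:corr} writes $\mathbb{E}[\prod_{j=1}^{k}(I_{i_j}-\pi_{i_j})]$ as $\sum_{m=2}^{k}(-1)^{k-m}$ times a sum, over the $m$-element subsets $B$ of $A_k$, of $(\pi_B-\prod_{i\in B}\pi_i)\prod_{i\in A_k\setminus B}\pi_i$, where $\pi_B$ denotes the inclusion probability of the units in $B$. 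Into each factor $\pi_B-\prod_{i\in B}\pi_i$ I would substitute Theorem~\ref{theorem:main}(ii) applied to the set $B$, which gives $\pi_B-\prod_{i\in B}\pi_i=-d^{-1}(\prod_{i\in B}\pi_i)\,e_2(B)+O(d^{-2})$, uniformly in $B$; since $\prod_{i\in A_k\setminus B}\pi_i\in[0,1]$, the $B$-term becomes $-d^{-1}(\prod_{i\in A_k}\pi_i)\,e_2(B)+O(d^{-2})$. As $k$ is fixed there are only finitely many terms, so the remainders sum to a uniform $O(d^{-2})$, and everything reduces to showing that the $d^{-1}$ part vanishes, i.e.\ that $\sum_{m=2}^{k}(-1)^{k-m}\sum_{|B|=m}e_2(B)=0$.

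The crux, and the only step that is not routine, is this combinatorial identity. Interchanging the two sums, the coefficient of a fixed $(1-\pi_i)(1-\pi_j)$, $\{i,j\}\subseteq A_k$, equals $\sum_{m=2}^{k}(-1)^{k-m}\binom{k-2}{m-2}$, because an $m$-subset of $A_k$ containing $\{i,j\}$ is obtained by adjoining $m-2$ of the remaining $k-2$ units. Substituting $l=m-2$, this is $(-1)^{k-2}\sum_{l=0}^{k-2}(-1)^{l}\binom{k-2}{l}=(-1)^{k-2}(1-1)^{k-2}$, which vanishes exactly when $k-2\ge 1$. Hence the $d^{-1}$ term disappears for all $k\ge 3$ and $\mathbb{E}[\prod_{j=1}^{k}(I_{i_j}-\pi_{i_j})]=O(d^{-2})$. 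This also accounts for the hypothesis $k\ge 3$: for $k=2$ the same sum equals $1$, in agreement with $\pi_{ij}-\pi_i\pi_j=O(d^{-1})$, which is not $O(d^{-2})$.

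I expect the main effort outside this identity to be purely bookkeeping: pinning down the precise multiplicities in Lemma~\ref{lem:corr} (ordered versus unordered $m$-tuples, and the labelling of the complement $A_k\setminus B$), verifying that the uniformity of the remainder in Theorem~\ref{theorem:main} is preserved through the finite recombinations used to pass from first powers to general exponents $n_j$, and checking that the coefficients produced by the linearisation $I_{i_j}^2=I_{i_j}$ remain bounded. No further cancellation is needed; the whole proposition rests on Theorem~\ref{theorem:main} together with the single binomial identity above.
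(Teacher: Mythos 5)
Your treatment of the case $n_1=\cdots=n_k=1$ is correct and coincides exactly with the paper's own argument: Lemma~\ref{lem:corr}, Theorem~\ref{theorem:main}(ii) applied to each subset, and the cancellation $\sum_{m=2}^{k}(-1)^{k-m}\binom{k-2}{m-2}=(1-1)^{k-2}=0$ for $k\ge 3$. The genuine gap is in the step you set aside as bookkeeping, namely the passage to general exponents. Writing $(I-\pi)^{n}=\mu_n+e_n(I-\pi)$ with $\mu_n=\mathbb{E}(I-\pi)^n=(1-\pi)(-\pi)^n+\pi(1-\pi)^n$ and $e_n=(1-\pi)^n-(-\pi)^n$, the expansion of $\prod_j\{\mu_{n_j}+e_{n_j}(I_{i_j}-\pi_{i_j})\}$ produces, for every subset $S\subseteq\{1,\dots,k\}$, a term $\bigl(\prod_{j\notin S}\mu_{n_j}\bigr)\bigl(\prod_{j\in S}e_{n_j}\bigr)\mathbb{E}\bigl[\prod_{j\in S}(I_{i_j}-\pi_{i_j})\bigr]$. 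The terms with $|S|\ge 3$ are $O(d^{-2})$ by the first part and those with $|S|=1$ vanish, but the terms with $|S|=2$ involve $\pi_{i j}-\pi_i\pi_j$, which by Theorem~\ref{theorem:main} is of exact order $d^{-1}$, and the term with $S=\emptyset$ is $\prod_j\mu_{n_j}=O(1)$. These survive unless enough exponents equal $1$ (note $\mu_1=0$ but $\mu_2=\pi(1-\pi)\neq0$). So the reduction does \emph{not} yield only ``correlations of the same form'': it also yields correlations over two indices and over none, which are not $O(d^{-2})$.

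Concretely, for $k=3$ and $(n_1,n_2,n_3)=(2,1,1)$, using $(I_{i_1}-\pi_{i_1})^2=(1-2\pi_{i_1})(I_{i_1}-\pi_{i_1})+\pi_{i_1}(1-\pi_{i_1})$ and the all-ones case,
\[
\mathbb{E}\bigl[(I_{i_1}-\pi_{i_1})^{2}(I_{i_2}-\pi_{i_2})(I_{i_3}-\pi_{i_3})\bigr]
=\pi_{i_1}(1-\pi_{i_1})\bigl(\pi_{i_2 i_3}-\pi_{i_2}\pi_{i_3}\bigr)+O(d^{-2}),
\]
which is of exact order $d^{-1}$; the claimed $O(d^{-2})$ therefore cannot be reached by your reduction for such exponent patterns, and indeed fails for them. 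You should know that the paper's own induction stumbles at the same place: when an exponent is lowered from $1$ to $0$, a $(k-1)$-fold correlation appears, and for $k=3$ this is a second-order correlation of order $d^{-1}$ not covered by the induction hypothesis. The statement is safe when all $n_j=1$ (more generally when at least three exponents equal $1$), and the applications in Section~\ref{section:application} are unaffected because the mixed-power conditions, such as the second line of~\eqref{eq:cond BO CJ} and of~\eqref{eq:cond W BO 1}, only require an $O(d^{-1})$ bound there (the normalization is $N^3/n^2\asymp N\asymp d$, not $d^2$).
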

The proofs of Lemma~\ref{lem:corr} and Proposition~\ref{prop:corr} are provided in Section~\ref{subsec:corr}.

Proposition~\ref{prop:corr} together with condition~\eqref{eq:cons1} imply that the following conditions
that appear for example in~\cite{BreidtOpsomer_2000} are satisfied:
\begin{equation}
\label{eq:cond W BO 1}
\begin{split}
&
\limsup_{N\to\infty}
\frac{N^4}{n^2}
\max_{(i,j,k,l)\in D_{4,N}}
\left|\mathbb{E}(I_{i}-\pi_{i})(I_{j}-\pi_{j})(I_{k}-\pi_{k})(I_{l}-\pi_{l})\right|<\infty\\
&\displaystyle
\limsup_{N\to\infty}
\frac{N^3}{n^2}
\max_{(i,j,k)\in D_{3,N}}\left|\mathbb{E}(I_{i}-\pi_{i})^2(I_{j}-\pi_{j})(I_{k}-\pi_{k})\right|<\infty.
\end{split}
\end{equation}
Other conditions on higher order correlations, such as
\begin{equation}
\label{eq:cond BO CJ 2}
\lim_{N\to\infty}
\max_{(i,j,k,l)\in D_{4,N}}
\left|\mathbb{E}(I_{i}I_{j}-\pi_{ij})(I_{k}I_{l}-\pi_{kl})\right|=0,
\end{equation}
that appears in~\cite{BreidtOpsomer_2000},
can be treated in the same manner.

The conditions in \cite{breidt_2007} and \cite{cardot_2010} on higher order correlations are equivalent to
the preceding ones,
whereas the conditions in \cite{Wang_2009}
differ somehow but they are all satisfied for the rejective sampling design as consequences of Proposition~\ref{prop:corr}.

\section{Proofs}\label{section:proofs}
\subsection{Proof of Lemma~\ref{lemma:probaK}}\label{subsec:lem1}
For the proof of Lemma~\ref{lemma:probaK},
we use Edgeworth expansions for probabilities of sums of independent random variables,
as given in Theorem 6.2 in~\cite{Hajek_1981}.
If $K=I_1+I_2\dots+I_N$ is a sum of independent Bernoulli random variables with
parameters $p_1,p_2, \ldots, p_N$,
and $d=\mathbb{V}(K)$.
Then, for $0\leq l\leq N$ and $m\geq 1$,
\begin{equation}
\label{eq:Petrov}
|P(K=l)-f_m(x)|=o(d^{-(m+1)/2})
\end{equation}
where $f_m(x)$ is the Edgeworth expansion of $P(K=l)$ up to order $m$, given by
\begin{equation}
\label{def:fm}
f_m(x)
=
d^{-1/2}\phi(x)\left(1+\sum_{j=1}^m P_j(x)\right),
\quad
\text{ with }
x=\frac{l-\mathbb{E}(K)}{d^{1/2}},
\end{equation}
where $\phi$ denotes the standard normal density and each $P_j$ is a linear combination of (probabilistic)
Hermite polynomials involving the cumulants of $K$.
Recall that the Hermite polynomials are defined by
\begin{equation}
\label{eq:def Hermite Polynomial}
H_k(x)=(-1)^k \text{e}^{x^2/2}\frac{\text{d}^k}{\text{d}x^k}\left[\text{e}^{-x^2/2}\right]
\end{equation}
for $k=0,1,2,\ldots$.

\begin{lemma}
\label{lemma:polynomials}
The polynomials $P_j$ in~\eqref{def:fm} can be expressed as:
\begin{equation}
\label{eq:order Hermite}
P_j(x)
=
d^{-j/2}
\sum_{\{k_m\}}
H_{j+2r}(x)
\prod_{m=1}^j
\frac{1}{k_m!}
\frac{1}{((m+2)!)^{k_m}}
\left(\frac{\kappa_{m+2}}{d}\right)^{k_m},
\end{equation}
where the sum is taken over all sets $\{k_m\}$ consisting of all non-negative integer solutions of
\begin{equation}
\label{eq:cond k r}
k_1+2k_2+\cdots+jk_j=j,
\end{equation}
and $r$ is defined by $k_1+k_2+\cdots+k_j=r$,
and where $\kappa_m$ is the $m$-th cumulant of $K$
and
$H_{j+2r}$ is the Hermite polynomial of degree $j+2r$ as given
in~\eqref{eq:def Hermite Polynomial}.
\end{lemma}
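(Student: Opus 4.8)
The plan is to recover the classical closed form of the Edgeworth correction polynomials for a sum of independent lattice variables (as found in Petrov's monograph) and then merely re-index the result so as to match~\eqref{eq:order Hermite}. Write $\widetilde K=(K-\mathbb{E}(K))/d^{1/2}$, so that $\widetilde K$ is centred with unit variance, and let $\psi(t)=\log\mathbb{E}\bigl[e^{it\widetilde K}\bigr]$ be its cumulant generating function. Expanding in cumulants gives the formal identity
\[
\psi(t)=-\frac{t^{2}}{2}+\sum_{m\ge 1}\frac{\kappa_{m+2}}{(m+2)!}\,\frac{(it)^{m+2}}{d^{(m+2)/2}},
\]
where $\kappa_{q}$ is the $q$-th cumulant of $K$ itself (not of $\widetilde K$); note that the $m$-th term carries the factor $d^{-(m+2)/2}=d^{-1}\cdot d^{-m/2}$, which is the source both of the explicit $\kappa_{m+2}/d$ appearing in~\eqref{eq:order Hermite} and of the residual power of $d^{-1/2}$.

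Next I would factor $e^{-t^{2}/2}$ out of $e^{\psi(t)}$ and expand the exponential of the remaining series as a formal power series in $d^{-1/2}$:
\[
e^{\psi(t)}=e^{-t^{2}/2}\exp\!\left(\sum_{m\ge 1}\frac{\kappa_{m+2}}{(m+2)!}\,\frac{(it)^{m+2}}{d^{(m+2)/2}}\right)=e^{-t^{2}/2}\left(1+\sum_{j\ge 1}d^{-j/2}\,Q_{j}(it)\right),
\]
with each $Q_{j}$ a polynomial. Collecting the coefficient of $d^{-j/2}$ amounts to choosing, for every $m\ge 1$, a multiplicity $k_{m}\ge 0$ with which the $m$-th term of the cumulant series is used, subject to the constraint $\sum_{m}m k_{m}=j$, which is exactly~\eqref{eq:cond k r}. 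The Taylor expansion of the exponential produces the multinomial weight $\prod_{m}\frac{1}{k_{m}!}\bigl((m+2)!\bigr)^{-k_{m}}\bigl(\kappa_{m+2}/d\bigr)^{k_{m}}$, while the attached power of $it$ is $(it)^{\sum_{m}(m+2)k_{m}}=(it)^{j+2r}$ with $r=\sum_{m}k_{m}$, and the leftover powers of $d$ collapse to the global factor $d^{-j/2}$ displayed in~\eqref{eq:order Hermite}.

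Finally I would invert the Fourier transform term by term. Using the relation $(it)^{q}e^{-t^{2}/2}\longleftrightarrow H_{q}(x)\phi(x)$, which follows from $\phi^{(q)}(x)=(-1)^{q}H_{q}(x)\phi(x)$ and hence from~\eqref{eq:def Hermite Polynomial}, each monomial $(it)^{j+2r}e^{-t^{2}/2}$ is transformed into $H_{j+2r}(x)\phi(x)$ evaluated at $x=(l-\mathbb{E}(K))/d^{1/2}$. This identifies the polynomial factor multiplying $d^{-1/2}\phi(x)$ in~\eqref{def:fm} with $1+\sum_{j}P_{j}(x)$, where $P_{j}$ is precisely~\eqref{eq:order Hermite}; that these formal series coincide with the genuine Edgeworth expansion, with the error control~\eqref{eq:Petrov}, is guaranteed by Theorem 6.2 in~\cite{Hajek_1981}, so the only remaining task is the bookkeeping just described. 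A quick check for $j=1,2$ (yielding $\tfrac{\kappa_{3}}{6}d^{-3/2}H_{3}$, respectively $\tfrac{\kappa_{4}}{24}d^{-2}H_{4}+\tfrac{\kappa_{3}^{2}}{72}d^{-3}H_{6}$) confirms the normalisation.

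The main obstacle is not analytic — convergence and remainder are already absorbed into Hájek's Theorem 6.2 — but purely combinatorial: one must carefully separate the two distinct roles played by $d$ (the explicit $\kappa_{m+2}/d$ factors versus the overall $d^{-j/2}$) and verify that the index $r$, defined through $k_{1}+\cdots+k_{j}=r$, is exactly the amount by which the degree $j+2r$ of the relevant Hermite polynomial exceeds $j$. Setting up a bijection between the sets $\{k_{m}\}$ solving $\sum_{m}m k_{m}=j$ and the monomials arising in the expanded exponential, with the correct multinomial constant, is where all the care is needed.
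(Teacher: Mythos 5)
Your proposal is correct, and it reaches the formula by a more self-contained route than the paper. The paper's proof simply quotes the explicit Edgeworth series for lattice sums (equation (43) of Blinnikov and Moesner) and then performs the single reindexing step $\prod_m S_{m+2}^{k_m}=d^{-j}\prod_m(\kappa_{m+2}/d)^{k_m}$ using the constraint $\sum_m mk_m=j$; you instead rederive that cited formula from scratch, by expanding the log-characteristic function of the standardized sum in cumulants, exponentiating to obtain the multinomial weights $\prod_m\frac{1}{k_m!}((m+2)!)^{-k_m}$ over solutions of $\sum_m mk_m=j$, and converting $(it)^{j+2r}e^{-t^2/2}$ into $H_{j+2r}(x)\phi(x)$ via $\phi^{(q)}(x)=(-1)^qH_q(x)\phi(x)$. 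Your bookkeeping of the powers of $d$ (the split $d^{-(m+2)/2}=d^{-1}\cdot d^{-m/2}$, giving the overall $d^{-j/2}$ together with the $d^{-r}$ hidden in the $(\kappa_{m+2}/d)^{k_m}$ factors) and your spot checks for $j=1,2$ are exactly right and consistent with the expressions used later in the paper's proof of Lemma~\ref{lemma:probaK}. Both arguments defer the same analytic content — that the formal series, term-by-term inverted over $\mathbb{R}$ rather than over the lattice dual $[-\pi,\pi]$, really approximates the point probability with the error in~\eqref{eq:Petrov} — to Hájek's Theorem 6.2, so your treatment is at the same level of rigor as the paper's; what your derivation buys is independence from the Blinnikov--Moesner reference, at the cost of having to carry out the combinatorial expansion that the paper imports ready-made.
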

\begin{proof}
The proof relies on the Edgeworth expansion of $P(K=l)$, e.g., see (43) in~\cite{Blinnikov_Moesner_1998},
\[
P\left(K=l\right)
=
d^{-1/2}\phi(x)
\left\{
1+\sum_{j=1}^{\infty}
d^{j/2}
\sum_{\{k_m\}}
H_{j+2r}(x)
\frac{1}{k_m!}
\prod_{m=1}^j
\left(\frac{S_{m+2}}{(m+2)!}\right)^{k_m}
\right\},
\]
where $x=(l-\mathbb{E}_P(K))d^{-1/2}$ and $S_m=\kappa_m/d^{m-1}$.
This means that
\[
P_j(x)=d^{j/2}
\sum_{\{k_m\}}
H_{j+2r}(x)
\prod_{m=1}^j
\frac{1}{k_m!}
\left(\frac{S_{m+2}}{(m+2)!}\right)^{k_m}.
\]
Note that
\[
\prod_{m=1}^j
S_{m+2}^{k_m}
=
\prod_{m=1}^j
\left(
\frac{\kappa_{m+2}}{d^{m+1}}
\right)^{k_m}
=
\prod_{m=1}^j\left(\dfrac{\kappa_{m+2}}{d}\right)^{k_m}
\prod_{m=1}^j
d^{-mk_m}=d^{-j}\prod_{m=1}^j\left(\dfrac{\kappa_{m+2}}{d}\right)^{k_m},
\]
according to~\eqref{eq:cond k r}.
This yields (\ref{eq:order Hermite}).
\end{proof}
The next lemma shows that the cumulants of the sum of independent Bernoulli variables are of the same order as the variance.
\begin{lemma}
\label{lemma:cumulants}
Let $K=I_1+I_2+\dots+I_N$ be a sum of independent Bernoulli random variables with parameters
$p_1,p_2,\ldots, p_N$.
Let $d=\mathbb{V}(K)=\sum_{i=1}^Np_i(1-p_i)$.
Then, for any positive integer $m$, we have $\kappa_m=O(d)$,
as $d\to \infty$, uniformly in $p_1,p_2, \dots, p_N$.
\end{lemma}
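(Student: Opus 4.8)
The plan rests on two elementary observations: cumulants add over independent summands, and the $m$-th cumulant of a single Bernoulli variable, viewed as a polynomial in its parameter, is divisible by $p(1-p)$ once $m\ge2$. First I would write $\kappa_m(K)=\sum_{i=1}^N\kappa_m(I_i)$, using that $I_1,\dots,I_N$ are independent. It then suffices to produce a constant $C_m$, depending only on $m$, with $|\kappa_m(I_i)|\le C_m\,p_i(1-p_i)$ for every $i$; summing over $i$ gives $|\kappa_m(K)|\le C_m\sum_{i=1}^Np_i(1-p_i)=C_m d$, which is $O(d)$ and, since $C_m$ does not depend on the parameters, uniform in $p_1,\dots,p_N$.

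To obtain the per-summand bound I would analyse the cumulant generating function of $I\sim\mathrm{Bernoulli}(p)$, namely $g_p(t)=\log\bigl(1+p(e^t-1)\bigr)$. Expanding $\log\bigl(1+p(e^t-1)\bigr)=\sum_{k\ge1}\tfrac{(-1)^{k-1}}{k}p^k(e^t-1)^k$ and collecting the coefficient of $t^m$ shows that $\kappa_m(I)=g_p^{(m)}(0)$ is a polynomial in $p$ of degree at most $m$ with vanishing constant term. For $m\ge2$ this polynomial also vanishes at $p=1$: in that case $I\equiv1$ is degenerate, so $g_1(t)=t$ and every cumulant of order $\ge2$ is zero. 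Hence $p(1-p)$ divides $\kappa_m(I)$, i.e.\ $\kappa_m(I)=p(1-p)R_m(p)$ for a polynomial $R_m$ (of degree at most $m-2$) whose coefficients depend only on $m$. Taking $C_m:=\max_{p\in[0,1]}|R_m(p)|<\infty$, which is finite because $R_m$ is a fixed polynomial on a compact interval, yields $|\kappa_m(I_i)|\le C_m\,p_i(1-p_i)$, as needed.

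An alternative I would keep in reserve avoids the factorisation: $\kappa_m(I_i)$ is a fixed polynomial in the central moments $\mu_j(I_i)=\mathbb{E}(I_i-p_i)^j=p_i(1-p_i)\bigl[(1-p_i)^{j-1}+(-1)^jp_i^{j-1}\bigr]$, each of which is bounded in absolute value by $2p_i(1-p_i)\le\tfrac12$; since every monomial appearing in the moment-cumulant relation has total weight $m$ and therefore contains at least one factor $\mu_j$ with $j\ge2$, its modulus is at most a constant (depending on $m$) times $p_i(1-p_i)$. Either way one lands on the same bound.

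I do not expect a genuine obstacle here; the only point requiring care is the restriction $m\ge2$. For $m=1$ the claim fails in general, since $\kappa_1(K)=n=\sum_i p_i$ need not be $O(d)$ (take $p_i\equiv1-1/N$, so $n\to\infty$ while $d\to1$), and in any case only cumulants of order $\ge2$ --- indeed of order $\ge3$ --- enter the Edgeworth expansion of $K$ used in Lemma~\ref{lemma:polynomials}; for $m=2$ the bound is the identity $\kappa_2(K)=d$.
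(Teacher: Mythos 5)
Your proof is correct and follows the same overall skeleton as the paper's: additivity of cumulants over the independent summands, a factorisation $\kappa_m(\mathrm{Bernoulli}(p))=p(1-p)R_m(p)$ with $R_m$ a fixed polynomial, and a uniform bound on $R_m$ over $[0,1]$. The one place where you genuinely diverge is in how the factorisation is obtained. The paper invokes Khatri's recurrence $e_{m+1}=p(1-p)\,\tfrac{\mathrm{d}}{\mathrm{d}p}e_m$ for the cumulants of a single Bernoulli variable and runs an induction, which makes the divisibility by $p(1-p)$ immediate but relies on citing that recurrence. You instead read the divisibility off from the structure of the cumulant generating function: expanding $\log\bigl(1+p(e^t-1)\bigr)$ shows $\kappa_m$ is a polynomial in $p$ with zero constant term, and degeneracy at $p=1$ forces a root there for $m\ge2$. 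This is self-contained, needs no external reference, and even gives the slightly sharper degree bound $\deg R_m\le m-2$ (versus $m-1$ in the paper); the paper's recurrence, on the other hand, is what it later reuses to compute $\kappa_3$ and $\kappa_4$ explicitly, so it earns its keep elsewhere. Your remark about $m=1$ is well taken: $\kappa_1=\sum_i p_i=n$ is not $O(d)$ uniformly (one can have $d\to\infty$ with $n/d\to\infty$), so the lemma as stated is literally false for $m=1$; the paper's own induction silently starts at $m=2$, and since only cumulants of order $\ge3$ enter the Edgeworth polynomials, the slip is harmless, but it deserves the explicit caveat you give it.
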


\begin{proof}
Recall that the cumulants of a random variable $X$ are defined as the coefficients in the expansion of the logarithm of the moment-generating function,
i.e., if
\[
g(t)=\log \mathds{E}(\text{e}^{tX})=\sum_{m=1}^\infty \kappa_m \frac{t^m}{m!},
\]
the $m$-th cumulant is $\kappa_m=g^{(m)}(0)$.
This implies that the $m$-th cumulant $\kappa_m$ of the sum of independent Bernoulli random variables is equal to the sum of the
$m$-th cumulants $e_m$ of the individual Bernoulli variables.
Moreover, we have the following recurrence relation between the cumulants of a single Bernoulli variable with parameter~$p$:
\begin{equation}
\label{eq:khatri}
e_{m+1}=p(1-p)\frac{\text{d}}{\text{d}p}e_m,
\end{equation}
see for instance, example (c) in Section 4 in~\cite{Khatri_1959}.
It is straightforward to see that $\kappa_1=p_1+p_2+\cdots+p_N$
and $\kappa_2=\sum_{i=1}^N p_i(1-p_i)=d$.
It can be proved by induction, using (\ref{eq:khatri}), that $e_m=p(1-p)R_m(p)$,
where $R_m$ is a polynomial with degree less than or equal to $m-1$ and with coefficients depending only on $m$.
Thus,
$\kappa_m=dQ_m(p)$,
where~$Q_m(p)$ is of the form
\[
Q_m(p)
=
\frac{\sum_{i=1}^Np_i(1-p_i)R_m(p_i)}{\sum_{i=1}^Np_i(1-p_i)}
\]
and is bounded uniformly in $p_1,p_2\ldots,p_N$. This proves the lemma.
%
%
\end{proof}

\begin{proof}[Proof of Lemma~\ref{lemma:probaK}]
We use~\eqref{eq:Petrov} with $m=4$.
Because $\mathbb{E}_P(K)=n$, formula~\eqref{def:fm} is used with $x=0$.
To precise the expressions of $P_j(0)$, for $j=1,2,3,4$, we use Lemma~\ref{lemma:polynomials}.
It follows from~\eqref{eq:def Hermite Polynomial} that the Hermite polynomials
satisfy the following recurrence relationship
\begin{equation}
\label{eq:recurrence Hermite Polynomial}
H_{k+1}(x)=-\text{e}^{x^2/2}\frac{\text{d}}{\text{d}x}\left[H_k(x)\text{e}^{-x^2/2}\right].
\end{equation}
By induction it follows from~\eqref{eq:recurrence Hermite Polynomial}
that for any integer $j=0,1,\ldots$,
the Hermite polynomials $H_{2j}$ and $H_{2j+1}$ are of the form
\[
\begin{split}
H_{2j}(x)&=a_{0j}+a_{1j}x^2+\cdots+a_{jj}x^{2j},\\
H_{2j+1}(x)&=b_{1j}x+b_{2j}x^3+\cdots+b_{jj}x^{2j+1}.
\end{split}
\]
It follows that $H_{2j+1}(0)=0$, for any integer $j$.
Combining this with Lemmas~\ref{lemma:polynomials} and~\ref{lemma:cumulants}, we can see that $P_{2j+1}(0)=0$ and $P_{2j}(0)=O(d^{-j})$ for any integer $j$.
Thus,
$P_1(0)=P_3(0)=0$ and $P_4(0)=O(d^{-2})$. Moreover,
\[
P_2(0)
=\frac{H_6(0)}{2!(3!)^2}\frac{\kappa_3^2}{d^3}+\frac{H_4(0)}{4!}\frac{\kappa_4}{d^2}
=-
\frac{15}{72}\frac{\kappa_3^2}{d^3}+\frac{3}{24}\frac{\kappa_4}{d^2}.
\]
Finally, from~\eqref{eq:khatri} one can easily deduce that $\kappa_3=d(1-2\overline{\overline{p}})$
and $\kappa_4=d(1-6\overline{\overline{p(1-p)}})$.
We then obtain:
\[
\begin{split}
P(K=n)
&=
d^{-1/2}\phi(0)
\left\{1+\sum_{j=1}^4 P_j(0)+O\left(d^{-2}\right)\right\}\\
&=
(2\pi d)^{-1/2}
\left\{
1-\frac{5}{24}\left(1-2\overline{\overline{p}}\right)^2d^{-1}+\frac{1}{8}\left(1-6\overline{\overline{p(1-p)}}\right)d^{-1}
+O(d^{-2})
\right\}\\
&=
(2\pi d)^{-1/2}
\left\{1
+
c_1d^{-1}+O\left(d^{-2}\right)
\right\}.
\end{split}
\]
For the expansion of $P(K=n|I_{i_1}=\dots=I_{i_k}=1)$,
let $E_k$ denote the event $\{I_j=1,\text{ for all }j\in A_k\}$
and define the random variable $\widetilde{K}=K\mid E_k$.
Note that it can be written as the sum of independent Bernoulli's,
\[
\widetilde{K}=\sum_{j\notin A_k}I_{j}+\sum_{j\in A_k}I^*_{j}
\]
where $I^*_j=1$.
Thus, we can write an Edgeworth expansion for $\widetilde{K}$ as stated in~\eqref{eq:Petrov}.
Since
\begin{equation}
\label{eq:def cond cumulants}
\begin{split}
\mathbb{E}(\widetilde{K})
&=
\sum_{j\notin A_k}p_j+k=n+k-\sum_{j\in A_k}p_j=n+k-B_1,\\
\mathbb{V}(\widetilde{K})
&=
\sum_{j\notin A_k}p_j(1-p_j)=d-\sum_{j\in A_k}p_j(1-p_j)=d-B_2,
\end{split}
\end{equation}
with $\tilde{d}=d-B_2$, the expansion is as follows:
\[
P(\widetilde{K}=n)
=
\widetilde{d}^{-1/2}\phi(\widetilde{x})
\left\{1+\sum_{j=1}^4 P_j^*(\widetilde{x})\right\}+o(\widetilde{d}^{-5/2}),
\quad
\text{ with }
\widetilde{x}
=
\frac{n-\mathbb{E}(\widetilde{K})}{\widetilde{d}^{1/2}},
\]
where the $P_j^*$'s are the polynomials given in (\ref{eq:order Hermite}) corresponding to $\widetilde{K}$.

Let us first compute an expansion for $\widetilde{d}^{-1/2}\phi(\widetilde{x})$.
We start with the expansion of $\widetilde{d}^{-1/2}$:
\begin{equation}
\label{exp:sigmatilde}
\widetilde{d}^{-1/2}
=(d-B_2)^{-1/2}
=
d^{-1/2}
\left\{1+\frac12B_2d^{-1}+O(d^{-2})\right\}.
\end{equation}
Next, remark that
\begin{equation}
\label{exp:xtilde}
\widetilde{x}
=
(d-B_2)^{-1/2}(B_1-k)=
d^{-1/2}(B_1-k)
\left\{1+\frac12B_2d^{-1}+O(d^{-2})\right\},
\end{equation}
so that
$$
\phi(\widetilde{x})
=(2\pi)^{-1/2}
\left\{
1-\frac12\widetilde{x}^2+O(\widetilde{x}^4)
\right\}
=
(2\pi)^{-1/2}
\left\{
1-\frac12(B_1-k)^2d^{-1}
+O(d^{-2})
\right\}.
$$
Together with (\ref{exp:sigmatilde}), this gives
\begin{equation}
\label{exp:sigmaphi}
\widetilde{d}^{-1/2}
\phi(\widetilde{x})
=(2\pi d)^{-1/2}
\left\{
1+a_1d^{-1}+O(d^{-2})
\right\},
\end{equation}
where $a_1=(B_2-(B_1-k)^2)/2$.
Finally, we compute $P_j^*(\widetilde{x})$, for $j=1,2,3,4$.
First, let us compute the third and fourth cumulants of $\widetilde{K}$.
We have that
\[
\begin{split}
\kappa_3^*
&=
\kappa_3-\sum_{A_k}p_j(1-p_j)(1-2p_j)=\kappa_3-B_3,\\
\kappa_4^*
&=
\kappa_4-\sum_{A_k}p_j(1-p_j)(1-6p_j+6p_j^2)=\kappa_4-B_4,
\end{split}
\]
with $B_3$ and $B_4$ ad hoc constants.
Thus, by Lemmas~\ref{lemma:polynomials} and~\ref{lemma:cumulants}
with (\ref{exp:sigmatilde}) and (\ref{exp:xtilde}),
\[
P_1^*(\widetilde{x})
=
\frac{H_3(\widetilde{x})}{6}\frac{\kappa_3^*}{\widetilde{d}^{3/2}}
=
-\frac{1}{2}\frac{\kappa_3^*}{\widetilde{d}^{3/2}}
\left(\widetilde{x}+O(\widetilde{x}^3)\right)
=
-\frac12(B_1-k)\left(1-2\overline{\overline{p}}\right)d^{-1}\left(1+O(d^{-1})\right),
\]
and likewise
\[
\begin{split}
P_2^*(\widetilde{x})
&=
\frac{H_6(\widetilde{x})}{72}\frac{(\kappa_3^*)^2}{\widetilde{d}^3}
+
\frac{H_4(\widetilde{x})}{24}\frac{\kappa_4^*}{\widetilde{d}^2}
=
\left(-\frac{5}{24}\frac{(\kappa_3^*)^2}{\widetilde{d}^3}
+
\frac{1}{8}\frac{\kappa_4^*}{\widetilde{d}^2}\right)(1+O\left(\widetilde{x}^2)\right)\\
&=
\left\{
-\frac{5}{24}\left(1-2\overline{\overline{p}}\right)^2+\frac18\left(1-6\overline{\overline{p(1-p)}}\right)
\right\}d^{-1}\left(1+O(d^{-1})\right).
\end{split}
\]
Moreover, similarly to Lemma~\ref{lemma:cumulants}, one has $\kappa_m^*=O(d)$,
for any positive integer $m$.
Hence, for any integer $j$, $P_{2j}^*(\widetilde{x})=O(d^{-j})$ and $P_{2j+1}^*(\widetilde{x})=O(d^{-(j+1)})$, so that
$P_3^*(\widetilde{x})=O(d^{-2})$
and
$P_4^*(\widetilde{x})=O(d^{-2})$.
It follows that
\begin{eqnarray}\label{exp:sumPj}
1+\sum_{j=1}^4 P_j^*(\widetilde{x})=1+c_1^*d^{-1}+O(d^{-2}),
\end{eqnarray}
where
\[
\begin{split}
c_1^*
&=
-\frac12(B_1-k)\left(1-2\overline{\overline{p}}\right)
-\frac{5}{24}\left(1-2\overline{\overline{p}}\right)^2
+\frac18\left(1-6\overline{\overline{p(1-p)}}\right)\\
&=
-\frac12(B_1-k)\left(1-2\overline{\overline{p}}\right)
+c_1.
\end{split}
\]
Combining (\ref{exp:sigmaphi}) and (\ref{exp:sumPj}) proves the lemma.
\end{proof}

\subsection{Comparison with assumptions in Arratia et al.}
\label{subsec:comp}
In \cite{Arratiaetal_2005}, the following condition is used for rejective sampling.
For all $\delta\in(0,1)$, there exists $\epsilon\in(0,1)$, such that
\begin{equation}
\label{eq:cond Arratia}
\limsup_{N\to\infty}
\frac1N
\sum_{i=1}^N
\mathds{1}
\left\{
\frac{\epsilon}{1+\epsilon}<p_i<\frac1{1+\epsilon}
\right\}
\geq
1-\delta.
\end{equation}
This condition implies our condition (\ref{eq:cond BLR}), because
\[
d
=
\sum_{i=1}^N p_i(1-p_i)\\
\geq
N(1-\delta)\frac{\epsilon}{1+\epsilon}\left(1-\frac1{1+\epsilon}\right)
\geq
N\lambda
>0,
\]
where $\lambda=(1-\delta)(\epsilon/(1+\epsilon))^2\in(0,1)$.

However, our condition is weaker, in the sense that we can construct an example which
satisfies~\eqref{eq:cond BLR}, but not~\eqref{eq:cond Arratia}.
To this end, suppose that $n/N\to\gamma\in(0,1)$.
Take $\delta\in(0,1)$, such that $0<\gamma<1-\delta<1$.
Furthermore, choose $\alpha\in(0,1)$, such that
$0<\gamma<\alpha<1-\delta<1$,
and let $k=\alpha N$.
Then define
\[
p_1=\cdots=p_k=\frac{\gamma}{\alpha}
\quad\text{ and }\quad
p_{k+1}=\cdots=p_N=\delta_n=\frac{n/N-\gamma}{1-\alpha}\to 0.
\]
First note that this choice is possible  in rejective sampling, since
\[
p_1+\cdots+p_N
=
k\times\frac{\gamma}{\alpha}+(N-k)\times\delta_n
=
N\gamma+N(1-\alpha)\frac{n/N-\gamma}{1-\alpha}=n.
\]
With these probabilities, condition~\eqref{eq:cond Arratia} is not satisfied for any $\epsilon\in(0,1)$,
because for $N$ sufficiently large
$p_{k+1}=\cdots=p_N<\epsilon/(1+\epsilon)$, so that
\[
\frac1N
\sum_{i=1}^N
\mathds{1}
\left\{
\frac{\epsilon}{1+\epsilon}<p_i<\frac1{1+\epsilon}
\right\}
\leq
\frac{k}N
=
\alpha<1-\delta,
\]
whereas condition~\eqref{eq:cond BLR} is fulfilled, as
\[
\frac{d}{N}
=
\frac{n}{N}-\frac1N\sum_{i=1}^N p_i^2
=
\frac{n}{N}-\frac{k}{N}\left(\frac{\gamma}{\alpha}\right)^2-\frac{N-k}N\delta_n^2
=
\frac{n}{N}-\frac{\gamma^2}{\alpha}-(1-\alpha)\delta_n^2
\to
\gamma-\frac{\gamma^2}{\alpha}
\geq
\lambda
\]
where $\lambda=(\gamma-\gamma^2/\alpha)/2\in(0,1)$.

\subsection{Proofs of Lemma~\ref{lem:corr} and Proposition~\ref{prop:corr} }\label{subsec:corr}

\begin{proof}[Proof of Lemma~\ref{lem:corr}]
We decompose the product in the following way:
\[
\begin{split}
&\mathbb{E}\left[\prod_{j=1}^k(I_{i_j}-\pi_{i_j})\right]\\
&=
\pi_{i_1}\pi_{i_2}\ldots \pi_{i_k}(-1)^k
+
\mathbb{E}\left[
\sum_{m=1}^k\sum_{D_{m,k}}I_{i_{j_1}}I_{i_{j_2}}\dots I_{i_{j_m}}\pi_{i_{j_{m+1}}}\dots \pi_{i_{j_k}}(-1)^{k-m}\right]\\
&=
\pi_{i_1}\pi_{i_2}\ldots \pi_{i_k}(-1)^k
+
\sum_{m=1}^k\sum_{D_{m,k}}\pi_{i_{j_1}i_{j_2}\dots i_{j_m}}\pi_{i_{j_{m+1}}}\dots \pi_{i_{j_k}}(-1)^{k-m}\\
&=
\sum_{m=1}^k\sum_{D_{m,k}}
\left(\pi_{i_{j_1}i_{j_2}\dots i_{j_m}}-\pi_{i_{j_1}}\pi_{i_{j_2}}\dots \pi_{i_{j_m}}\right)\pi_{i_{j_{m+1}}}\dots \pi_{i_{j_k}}(-1)^{k-m}\\
&\qquad+
\pi_{i_1}\pi_{i_2}\ldots \pi_{i_k}(-1)^k
+
\sum_{m=1}^k\sum_{D_{m,k}}
\pi_{i_{j_1}}\pi_{i_{j_2}}\dots \pi_{i_{j_m}}\pi_{i_{j_{m+1}}}\dots \pi_{i_{j_k}}(-1)^{k-m}.
\end{split}
\]
The last two terms on the right hand side are equal to
\[
\begin{split}
&\pi_{i_1}\pi_{i_2}\ldots \pi_{i_k}(-1)^k
+
\sum_{m=1}^k\sum_{D_{m,k}}\pi_{i_1}\pi_{i_2}\ldots \pi_{i_k}(-1)^{k-m}\\
&=
\pi_{i_1}\pi_{i_2}\ldots \pi_{i_k}(-1)^k
+
\pi_{i_1}\pi_{i_2}\ldots \pi_{i_k}\sum_{m=1}^k\binom{k}{m}(-1)^{k-m}\\
&=
\pi_{i_1}\pi_{i_2}\ldots \pi_{i_k}\sum_{m=0}^k\binom{k}{m}(-1)^{k-m}
=
\pi_{i_1}\pi_{i_2}\ldots \pi_{i_k}(1-1)^k=0.
\end{split}
\]
\end{proof}

\begin{proof}[Proof of Proposition~\ref{prop:corr}]
The proof is by induction on the powers $n_j$.
We first prove that
\begin{equation}
\label{prop:step1}
\mathbb{E}
\left[
\prod_{j=1}^k (I_{i_j}-\pi_{i_j})
\right]
=
O(d^{-2}),
\end{equation}
for any $A_k=\{i_1,i_2,\ldots,i_k\}\subset\{1,2,\ldots,N\}$, with $3\leq k\leq N$
and then add an extra power one by one.
From Lemma~\ref{lem:corr}, we have that
\[
\mathbb{E}
\left[
\prod_{j=1}^k (I_{i_j}-\pi_{i_j})
\right]
=
\sum_{m=2}^k
(-1)^{k-m}
\sum_{(i_1,\ldots,i_m)\in D_{m,k}}
\left(
\pi_{i_1,\ldots,i_m}-\pi_{i_1}\cdots\pi_{i_m}
\right)
\pi_{i_{m+1}}\cdots\pi_{i_k},
\]
where $\{i_{m+1},\ldots,i_k\}=A_k\setminus \{i_1,\ldots,i_m\}$.
From Theorem 1, we have that
\[
\pi_{i_1,\ldots,i_m}-\pi_{i_1}\cdots\pi_{i_m}
=
-\pi_{i_1}\cdots\pi_{i_m}d^{-1}
\sum_{i<j}
(1-\pi_i)(1-\pi_j)+O(d^{-2}),
\]
where the sum runs over all $i<j$, such that $i,j\in\{i_1,\ldots,i_m\}$.
This means that
\[
\begin{split}
&
\mathbb{E}
\left[
\prod_{j=1}^k (I_{i_j}-\pi_{i_j})
\right]\\
&=
-d^{-1}
\pi_{i_1}\cdots\pi_{i_k}
\sum_{m=2}^k
(-1)^{k-m}
\sum_{(i_1,\ldots,i_m)\in D_{m,k}}
\sum_{i<j}
(1-\pi_i)(1-\pi_j)
+
O(d^{-2}).
\end{split}
\]
For $2\leq m\leq k$ fixed, consider the summation
\[
\sum_{(i_1,\ldots,i_m)\in D_{m,k}}
\sum_{i<j}
(1-\pi_i)(1-\pi_j).
\]
The first summation is over all possible $(i_1,\ldots,i_m)\in D_{m,k}$, which are all possible combinations of $m$ different indices from
$A_k=\{i_1,i_2,\ldots,i_k\}$.
From each such combination $i_1,\ldots,i_m$,
the second summation picks two different indices $i<j$ from the set $\{i_1,\ldots,i_m\}$.
This means that any combination of $(1-\pi_i)(1-\pi_j)$, with $\{i,j\}\subset A_k$ is possible.
In fact, each such combination will appear several times and we only have to count how many times.
Well, for a fixed combination $(i,j)$, from the $k$ possibilities $A_k$, we  need to pick $i$ and $j$,
and for the $m-2$ remaining choices there are $k-2$ possibilities left.
We conclude that each term $(1-\pi_i)(1-\pi_j)$, with $\{i,j\}\subset A_k$, appears $\binom{k-2}{m-2}$ times.
Moreover, this holds for any $m=2,3,\ldots,k$.
This means that
\[
\begin{split}
&
\sum_{m=2}^k
(-1)^{k-m}
\sum_{(i_1,\ldots,i_m)\in D_{m,k}}
\sum_{i<j}
(1-\pi_i)(1-\pi_j)\\
&=
\sum_{\{i,j\}\subset A_k}
(1-\pi_i)(1-\pi_j)
\sum_{m=2}^k
(-1)^{k-m}
\binom{k-2}{m-2},
\end{split}
\]
where
\[
\sum_{m=2}^k
(-1)^{k-m}
\binom{k-2}{m-2}
=
\sum_{n=0}^{k-2}
(-1)^{k-2-n}
\binom{k-2}n
=
(1-1)^{k-2}=0.
\]
We conclude that the coefficient of $d^{-1}$ is zero,
which proves (\ref{prop:step1}).

Next, suppose that the expectation is of order $O(d^{-2})$ for all powers $1\leq m_j\leq n_j$,
and consider
\[
\mathbb{E}
\left[
(I_{i_1}-\pi_{i_1})^{n_1+1}(I_{i_2}-\pi_{i_2})^{n_2}\cdots(I_{i_k}-\pi_{i_k})^{n_k}
\right].
\]
This can be written as
\[
\begin{split}
&
\mathbb{E}
\left[
I_{i_1}(I_{i_1}-\pi_{i_1})^{n_1}(I_{i_2}-\pi_{i_2})^{n_2}\cdots(I_{i_k}-\pi_{i_k})^{n_k}
\right]\\
&\qquad-
\pi_{i_1}
\mathbb{E}
\left[
(I_{i_1}-\pi_{i_1})^{n_1}(I_{i_2}-\pi_{i_2})^{n_2}\cdots(I_{i_k}-\pi_{i_k})^{n_k}
\right]\\
&=
\mathbb{E}
\left[
I_{i_1}(I_{i_1}-\pi_{i_1})^{n_1}(I_{i_2}-\pi_{i_2})^{n_2}\cdots(I_{i_k}-\pi_{i_k})^{n_k}
\right]
+
O(d^{-2})
\end{split}\]
according to the induction hypothesis.
Next, write
\[
\begin{split}
I_{i_1}(I_{i_1}-\pi_{i_1})^{n_1}
&=
(1-\pi_{i_1})
I_{i_1}
(I_{i_1}-\pi_{i_1})^{n_1-1}\\
&=
(1-\pi_{i_1})
(I_{i_1}-\pi_{i_1})^{n_1}
+
(1-\pi_{i_1})\pi_{i_1}
(I_{i_1}-\pi_{i_1})^{n_1-1}.
\end{split}
\]
When we insert this, we find
\[
\begin{split}
&\mathbb{E}
\left[
(I_{i_1}-\pi_{i_1})^{n_1+1}(I_{i_2}-\pi_{i_2})^{n_2}\cdots(I_{i_k}-\pi_{i_k})^{n_k}
\right]\\
&=
(1-\pi_{i_1})
\mathbb{E}
\left[
(I_{i_1}-\pi_{i_1})^{n_1}(I_{i_2}-\pi_{i_2})^{n_2}\cdots(I_{i_k}-\pi_{i_k})^{n_k}
\right]\\
&\qquad+
(1-\pi_{i_1})\pi_{i_1}
\mathbb{E}
\left[
(I_{i_1}-\pi_{i_1})^{n_1-1}(I_{i_2}-\pi_{i_2})^{n_2}\cdots(I_{i_k}-\pi_{i_k})^{n_k}
\right]
+
O(d^{-2})\\
&=
O(d^{-2})
\end{split}\]
by applying the induction hypothesis.

\end{proof}

\paragraph{Acknowledgements.}
The authors want to thank Guillaume Chauvet for helpful discussions.

\bibliography{rejective_bib}

\end{document}